\def\NZQ{\mathbb}               % the font for N,Z,Q,R,C
\def\ZZ{{\NZQ Z}}
\def\frk{\mathfrak}               % font for "Fraktur"
\def\Phi{{\frk N}}
\def\xb{{\bold x}}
\def\yb{{\bold y}}
\def\opn#1#2{\def#1{\operatorname{#2}}} % to make operators
\opn\chara{char} 
\opn\length{\ell} 
\opn\pd{pd} 
\opn\rk{rk}
\opn\projdim{proj\,dim} 
\opn\injdim{inj\,dim} 
\opn\rank{rank}
\opn\depth{depth} 
\opn\grade{grade} 
\opn\height{height}
\opn\embdim{emb\,dim} 
\opn\codim{codim}
\opn\Tr{Tr} 
\opn\bigrank{big\,rank}
\opn\superheight{superheight}
\opn\lcm{lcm}
\opn\trdeg{tr\,deg}%\emph{
\opn\reg{reg} 
\opn\lreg{lreg} 
\opn\ini{in} 
\opn\lpd{lpd}
\opn\size{size}
\opn\mult{mult}
\opn\dist{dist}
\opn\cone{cone}
\opn\lex{lex}
\opn\rev{rev}
\opn\div{div} \opn\Div{Div} \opn\cl{cl} \opn\Cl{Cl}
\opn\Spec{Spec} \opn\Supp{Supp} \opn\supp{supp} \opn\Sing{Sing}
\opn\Ass{Ass} \opn\Min{Min}
\opn\Ann{Ann} \opn\Rad{Rad} \opn\Soc{Soc}
\opn\Syz{Syz} \opn\Im{Im} \opn\Ker{Ker} \opn\Coker{Coker}
\opn\Am{Am} \opn\Hom{Hom} \opn\Tor{Tor} \opn\Ext{Ext}
\opn\End{End} \opn\Aut{Aut} \opn\id{id} \opn\ini{in}
\opn\nat{nat}
\opn\pff{pf}%   \pf exists already
\opn\Pf{Pf} \opn\GL{GL} \opn\SL{SL} \opn\mod{mod} \opn\ord{ord}
\opn\Gin{Gin}
\opn\Hilb{Hilb}\opn\adeg{adeg}\opn\std{std}\opn\ip{infpt}
\opn\Pol{Pol}
\opn\sat{sat}
\opn\Var{Var}
\opn\Gen{Gen}
\opn\aff{aff} \opn\con{conv} \opn\relint{relint} \opn\st{st}
\opn\lk{lk} \opn\cn{cn} \opn\core{core} \opn\vol{vol}
\opn\link{link} \opn\star{star}
\opn\gr{gr}
\def\pot#1#2{#1[\kern-0.28ex[#2]\kern-0.28ex]}
\opn\dirlim{\underrightarrow{\lim}}
\opn\inivlim{\underleftarrow{\lim}}
\def\Implies{\ifmmode\Longrightarrow \else
        \unskip${}\Longrightarrow{}$\ignorespaces\fi}
\def\implies{\ifmmode\Rightarrow \else
        \unskip${}\Rightarrow{}$\ignorespaces\fi}
\def\iff{\ifmmode\Longleftrightarrow \else
        \unskip${}\Longleftrightarrow{}$\ignorespaces\fi}
\newtheorem{Theorem}{Theorem}[section]
\newtheorem{Lemma}[Theorem]{Lemma}
\newtheorem{Proposition}[Theorem]{Proposition}
\newtheorem{Example}[Theorem]{Example}
\let\epsilon\varepsilon
\let\phi=\varphi
\let\kappa=\varkappa
\def\qed{\ifhmode\textqed\fi
      \ifmmode\ifinner\quad\qedsymbol\else\dispqed\fi\fi}
\def\textqed{\unskip\nobreak\penalty50
       \hskip2em\hbox{}\nobreak\hfil\qedsymbol
       \parfillskip=0pt \finalhyphendemerits=0}
\def\dispqed{\rlap{\qquad\qedsymbol}}
\opn\dis{dis}
\opn\height{height}
\opn\dist{dist}
\def\pnt{{\raise0.5mm\hbox{\large\bf.}}}
\opn\Lex{Lex}
\begin{document}
\title{Regularity and h-polynomials of binomial edge ideals}
\author{Takayuki Hibi and Kazunori Matsuda}
\address{Takayuki Hibi,
Department of Pure and Applied Mathematics,
Graduate School of Information Science and Technology,
Osaka University, Suita, Osaka 565-0871, Japan}
\email{hibi@math.sci.osaka-u.ac.jp}
\address{Kazunori Matsuda,
Kitami Institute of Technology, 
Kitami, Hokkaido 090-8507, Japan}
\email{kaz-matsuda@mail.kitami-it.ac.jp}
% \thanks{
% }
\subjclass[2010]{05E40, 13H10}
\keywords{binomial edge ideal, Castelnuovo--Mumford regularity, $h$-polynomial}
\begin{abstract}
Let $G$ be a finite simple graph on the vertex set $[n] = \{ 1, \ldots, n \}$ and $K[\xb, \yb] = K[x_1, \ldots, x_n, y_1, \ldots, y_n]$ the polynomial ring in $2n$ variables over a field $K$ with each $\deg x_i = \deg y_j = 1$.  The binomial edge ideal of $G$ is the binomial ideal $J_G \subset K[\xb, \yb]$ which is generated by those binomials $x_iy_j - x_jy_i$ for which $\{i,j\}$ is an edge of $G$.  The Hilbert series $H_{K[\xb, \yb]/J_G}(\lambda)$ of $K[\xb, \yb]/J_G$ is of the form $H_{K[\xb, \yb]/J_G}(\lambda) = h_{K[\xb, \yb]/J_G}(\lambda)/(1 - \lambda)^d$, where $d = \dim K[\xb, \yb]/J_G$ and where $h_{K[\xb, \yb]/J_G}(\lambda) = h_0 + h_1\lambda + h_2\lambda^2 + \cdots + h_s\lambda^s$ with each $h_i \in \ZZ$ and with $h_s \neq 0$ is the $h$-polynomial of $K[\xb, \yb]/J_G$.  It is known that, when $K[\xb, \yb]/J_G$ is Cohen--Macaulay, one has $\reg(K[\xb, \yb]/J_G) = \deg h_{K[\xb, \yb]/J_G}(\lambda)$, where $\reg(K[\xb, \yb]/J_G)$ is the (Castelnuovo--Mumford) regularity of $K[\xb, \yb]/J_G$.  
% A lot of computational experience supports the conjecture that $\reg(K[\xb, \yb]/J_G) \leq \deg h_{K[\xb, \yb]/J_G}(\lambda)$.  
In the present paper, given arbitrary integers $r$ and $s$ with $2 \leq r \leq s$, a finite simple graph $G$ for which $\reg(K[\xb, \yb]/J_G) = r$ and $\deg h_{K[\xb, \yb]/J_G}(\lambda) = s$ will be constructed.
\end{abstract}
\maketitle

\section*{Introduction}
The binomial edge ideal of a finite simple graph was introduced in \cite {origin} and in \cite{O1} independently.  (Recall that a finite graph $G$ is {\em simple} if $G$ possesses no loop and no multiple edge.)  Let $G$ be a finite simple graph on the vertex set $[n] = \{ 1, 2, \ldots, n \}$ and $K[\xb, \yb] = K[x_1, \ldots, x_n, y_1, \ldots, y_n]$ the polynomial ring in $2n$ variables over a field $K$ with each $\deg x_i = \deg y_j = 1$.  The {\em binomial edge ideal} $J_G$ of $G$ is the binomial ideal of $K[\xb, \yb]$ which is generated by those binomials $x_i y_j - x_j y_i$ for which $\{i, j\}$ is an edge of $G$.  

Let, in general, $S = K[x_1, \ldots, x_n]$ denote the polynomial ring in $n$ variables over a field $K$ with each $\deg x_i = 1$ and $I \subset S$ a homogeneous ideal of $S$ with $\dim S/I = d$.  The Hilbert series $H_{S/I}(\lambda)$ of $S/I$ is of the form $H_{S/I}(\lambda) = (h_0 + h_1\lambda + h_2\lambda^2 + \cdots + h_s\lambda^s)/(1 - \lambda)^d$, where each $h_i \in \ZZ$ (\cite[Proposition 4.4.1]{BH}).  We say that $h_{S/I}(\lambda) = h_0 + h_1\lambda + h_2\lambda^2 + \cdots + h_s\lambda^s$ with $h_s \neq 0$ is the {\em $h$-polynomial} of $S/I$.  Let $\reg(S/I)$ denote the ({\em Castelnuovo--Mumford}\,) {\em regularity} \cite[p.~168]{BH} of $S/I$.  
It is known (e.g., \cite[Corollary B.4.1]{V}) that, when $S/I$ is Cohen--Macaulay, one has $\reg(S/I) = \deg h_{S/I}(\lambda)$. 
Furthermore, in \cite{HM1} and \cite{HM2}, for given integers $r$ and $s$ with $r, s \geq 1$, 
a monomial ideal $I$ of $S = K[x_{1}, \ldots, x_{n}]$ with $n \gg 0$ for which $\reg(S/I) = r$ and 
$\deg h_{S/I} (\lambda) = s$ was constructed.  

Let, as before, $G$ be a finite simple graph on the vertex set $[n]$ with $d = \dim K[\xb, \yb]/J_G$ and $h_{K[\xb, \yb]/J_G}(\lambda) = h_0 + h_1\lambda + h_2\lambda^2 + \cdots + h_s\lambda^s$ the $h$-polynomial of $K[\xb, \yb]/J_G$.  
%A lot of computational experience supports the conjecture that 
%
%\begin{Conjecture}
%\label{conj}
%One has $\reg(K[\xb, \yb]/J_G) \leq \deg h_{K[\xb, \yb]/J_G}(\lambda)$.
%\end{Conjecture}

Now, in the present paper, given arbitrary integers $r$ and $s$ with $2 \leq r \leq s$, a finite simple graph $G$ on $[n]$ with $n \gg 0$ for which $\reg(K[\xb, \yb]/J_G) = r$ and $\deg h_{K[\xb, \yb]/J_G}(\lambda) = s$ will be constructed.

\begin{Theorem}
\label{main}
Given arbitrary integers $r$ and $s$ with $2 \leq r \leq s$, there exists a finite simple graph $G$ on $[n]$ with $n \gg 0$ for which $\reg(K[\xb, \yb]/J_{G}) = r$ and $\deg h_{K[\xb, \yb]/J_{G}}(\lambda) = s$. 
\end{Theorem}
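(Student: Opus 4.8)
The plan is to construct $G$ explicitly by combining two well-understood families of graphs, one controlling the regularity and the other controlling the degree of the $h$-polynomial, and then to glue them in a way that makes both invariants behave additively. A natural first building block for the regularity side is a path $P_m$ or, more flexibly, a graph built from a union of paths and cliques, since for such graphs $\reg(K[\xb,\yb]/J_G)$ is well understood; in fact for a path $P_m$ one has $\reg = m-1$, and more generally Matsuda--Murai and related results give good control. For the $h$-polynomial side, a complete graph $K_t$ (or a complete bipartite graph) has Cohen--Macaulay binomial edge ideal, so its regularity equals $\deg h$, and by taking $t$ large one can push $\deg h$ up to any desired value. The idea is therefore to take a graph $G$ that is a disjoint union, or a union identified along a single vertex, of a ``regularity gadget'' $G_1$ with $\reg(K[\xb,\yb]/J_{G_1}) = r$ and small $h$-degree, and a ``Hilbert gadget'' $G_2$ (Cohen--Macaulay, e.g.\ a clique) with $\reg(K[\xb,\yb]/J_{G_2})$ and $\deg h$ both chosen so that the totals come out to $r$ and $s$.

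The key steps, in order, would be: (1) Recall the behavior of $\reg$ and of the $h$-polynomial under disjoint union of graphs. For disjoint union $G = G_1 \sqcup G_2$ one has $J_G$ generated in the disjoint variable sets, so $K[\xb,\yb]/J_G$ is a tensor product of the two quotient rings; hence $\reg$ adds, $\dim$ adds, and the $h$-polynomial is the product $h_{G_1}(\lambda)\, h_{G_2}(\lambda)$, so $\deg h$ adds as well. (More precisely, connecting $G_1$ and $G_2$ along one vertex, as in the ``$*$-operation'' used for binomial edge ideals, is often the right move to keep things connected while retaining additivity; I would check which of disjoint union vs.\ one-vertex gluing gives clean additivity of both invariants.) (2) Choose $G_1$ to be a path $P_{r}$ (on $r$ vertices, or $P_{r+1}$—whichever gives $\reg = r$), for which $\reg(K[\xb,\yb]/J_{P}) = r$ and $\deg h$ is some explicitly computable value, say $a \le r$. (3) Choose $G_2$ to be a clique $K_{t}$ for a suitable $t$: then $K[\xb,\yb]/J_{K_t}$ is Cohen--Macaulay, so $\reg(K[\xb,\yb]/J_{K_t}) = \deg h_{K[\xb,\yb]/J_{K_t}}$, and by computing the Hilbert series of the generic determinantal-type ideal $J_{K_t}$ (which is the ideal of $2\times 2$ minors of a $2\times t$ matrix of variables) one finds this common value explicitly as a function of $t$. (4) Assemble: pick the parameters so that $\reg(G_1) + \reg(G_2) = r$ — this may force $G_2$ to contribute $0$ to regularity, i.e.\ $G_2$ contributes nothing, so more realistically one wants $G_1$ to carry all of $\reg$ with $\deg h_{G_1}$ as large as possible but still $\le r$, and then $G_2$ to top up $\deg h$ from $\deg h_{G_1}$ to $s$ while contributing $0$ to regularity. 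A graph with $\reg = 0$ is just a collection of isolated vertices (or edges giving $\reg \le 1$), which won't raise $\deg h$ enough, so the cleaner route is: take $G_1$ with $\reg = r$ and $\deg h_{G_1} = r$ (a Cohen--Macaulay gadget with regularity exactly $r$, e.g.\ a suitable clique $K_{r+1}$ or a "caterpillar"), and then take $G_2$ with $\reg = 0$ is impossible to gain $h$-degree, so instead...

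Let me restate the assembly more carefully, since this is the crux. The real obstruction is that under disjoint union \emph{both} invariants add, so from $G_1$ with $(\reg, \deg h) = (r_1, s_1)$ and $G_2$ with $(r_2, s_2)$ one gets $(r_1+r_2, s_1+s_2)$; to reach $(r,s)$ with $r \le s$ one needs a family of gadgets realizing $(1,1)$ and $(0,1)$, or more efficiently $(r,r)$ and $(0, s-r)$. A single edge gives $(1,1)$. The needed gadget is one with $\reg = 0$ but $\deg h$ arbitrarily large — but $\reg(K[\xb,\yb]/J_G)=0$ forces $K[\xb,\yb]/J_G$ to be a polynomial ring, so $\deg h = 0$; that route is dead. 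Hence disjoint union alone is \emph{not} enough, and the genuine content of the theorem is producing, for each fixed $r$, a \emph{single connected gadget} whose regularity stays pinned at $r$ while its $h$-polynomial degree can be inflated arbitrarily. The plan for that is to start from a Cohen--Macaulay graph with $\reg = r$ (so $\deg h = r$) — for instance a clique $K_{r+1}$, whose binomial edge ideal is the ideal of $2\times 2$ minors of a generic $2\times(r+1)$ matrix, with $\reg = r$ — and then attach to it a long ``pendant path'' or a sequence of triangles, chosen so that (by the addition/gluing lemmas for $\reg$ of binomial edge ideals, e.g.\ those of Kiani--Saeedi Madani or the decomposition via $G$-blocks) the regularity does \emph{not} increase beyond $r$ because the attached part individually has small regularity and the gluing is along a cut vertex in a controlled way, while the Hilbert series picks up extra factors of the form $(1 + \lambda + \cdots)$ that raise $\deg h$ by one per attached gadget. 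The hard part — and the step I would spend the most effort verifying — is precisely this simultaneous control: showing that the pendant attachments raise $\deg h$ (via an explicit Hilbert-series / Gröbner-degeneration computation, reducing $J_G$ to a squarefree monomial ideal whose Stanley--Reisner complex one can analyze) while provably \emph{not} raising $\reg$ (via the known bound $\reg(K[\xb,\yb]/J_G) \le c(G)$ or via exact sequences relating $G$ to $G \setminus v$ and $G_v$ at a cut vertex $v$). I would therefore organize the proof as: first fix the connected gadget realizing $(r, r)$; then prove a lemma that attaching a specific small graph (a triangle sharing one vertex, or a pendant edge) increases $\deg h$ by exactly one and keeps $\reg$ fixed; then iterate that lemma $s - r$ times.
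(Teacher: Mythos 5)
Your final architecture --- fix a connected gadget realizing $(\reg,\deg h)=(r,r)$, then prove a one-step lemma that raises $\deg h$ by exactly one while leaving $\reg$ fixed, and iterate it $s-r$ times --- is precisely the architecture of the paper's proof, and your observation that disjoint union is a dead end is correct. But both of your concrete instantiations have genuine problems. First, the clique $K_{r+1}$ does \emph{not} have regularity $r$: its binomial edge ideal is the ideal of $2\times 2$ minors of a generic $2\times(r+1)$ matrix, whose quotient has a \emph{linear} (Eagon--Northcott) resolution, so $\reg(K[\xb,\yb]/J_{K_{r+1}})=\deg h_{K[\xb,\yb]/J_{K_{r+1}}}(\lambda)=1$. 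The correct $(r,r)$ gadget is the one you mentioned first and then abandoned: the path $P_{r+1}$, whose binomial edge ideal is a complete intersection of $r$ quadrics, giving $\reg=\deg h=r$ (the paper's Example \ref{path}). Second, and more seriously, the iteration step fails as proposed: attaching a pendant path or a pendant triangle at a cut vertex \emph{lengthens the longest induced path} of the graph, and by the Matsuda--Murai lower bound $\reg(K[\xb,\yb]/J_G)\geq \ell(G)$ this strictly \emph{increases} the regularity rather than preserving it (a pendant edge on $P_{r+1}$ just produces $P_{r+2}$, with regularity $r+1$). So the step you flag as the one needing the most verification is not merely hard --- it is false for the gadgets you propose.

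The operation that actually works is the opposite extreme of a pendant attachment: the \emph{suspension} (cone), i.e.\ adding one new vertex adjacent to \emph{every} existing vertex, which creates no new long induced paths. Because $\widehat{G}$ is the join of $G$ with a point, the join-product theorem of Saeedi Madani--Kiani pins $\reg(K[\xb,\yb,x_{n+1},y_{n+1}]/J_{\widehat G})=\max\{\reg(K[\xb,\yb]/J_G),2\}$, and the Hilbert-series formula of Kumar--Sarkar shows the cone contributes an extra summand $\bigl(2\lambda+(n-1)\lambda^2\bigr)/(1-\lambda)^{n+2}$, which raises $\deg h$ by exactly one; this is the paper's Lemma \ref{suspension}. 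Starting from $P_{r+1}$ and suspending $s-r$ times settles $3\leq r<s$; the paper handles $r=s$ by the path alone and $r=2<s$ by the complete bipartite graph $K_{s-1,s-1}$ via the Schenzel--Zafar Hilbert series. To salvage your plan, replace the pendant gadget by the cone and replace $K_{r+1}$ by $P_{r+1}$.
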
 

\section{Proof of Theorem 0.1}

Our discussion starts in the computation of the regularity and the h-polynomial of the binomial edge ideal of a path graph. 

\begin{Example}
\label{path}
{\em
Let $P_{n}$ be the path on the vertex set $[n]$ with $\{1, 2\}, \{2, 3\}, \ldots, \{n - 1, n\}$ its edges.  Since $K[\xb, \yb]/J_{P_{n}}$ is a complete intersection, it follows that the Hilbert series of $K[\xb, \yb]/J_{P_{n}}$ is $H(K[\xb, \yb]/J_{P_{n}}, \lambda) = (1 + \lambda)^{n - 1}/(1 - \lambda)^{n + 1}$ and that $\reg(K[\xb, \yb]/J_{P_{n}}) = \deg h_{K[\xb, \yb]/J_{P_{n}}}(\lambda) = n - 1$.
} 
\end{Example}

Let $G$ be a finite simple graph on the vertex set $[n]$ and $E(G)$ its edge set.  The {\em suspension} of $G$ is the finite simple graph $\widehat{G}$ on the vertex set $[n+1]$ whose edge set is $E(\widehat{G}) = E(G) \cup \{ \{i, n + 1\} : i \in [n] \}$.  Given a positive integer $m \geq 2$, the {\em m-th suspension} of $G$ is the finite simple graph ${\widehat{G}}^{\,m}$ on $[n + m]$ which is defined inductively by ${\widehat{G}}^{\,m} = \widehat{{\widehat{G}}^{\,m-1}}$, where ${\widehat{G}}^{\,1} = \widehat{G}$.

\begin{Lemma}
\label{suspension}
Let $G$ be a finite connected simple graph on $[n]$ which is not complete.  Suppose that $\dim K[\xb, \yb]/J_{G} = n + 1$ and $\deg h_{K[\xb, \yb]/J_{G}} (\lambda) \geq 2$.  Then 
\begin{eqnarray*}
\reg \left(K[\xb, \yb, x_{n+1}, y_{n+1}]/J_{\widehat{G}} \right) = \reg(K[\xb, \yb]/J_{G}), \\
\deg h_{K[\xb, \yb, x_{n+1}, y_{n+1}]/J_{\widehat{G}}} (\lambda) = \deg h_{K[\xb, \yb]/J_{G}} (\lambda) + 1. 
\end{eqnarray*}
In particular, if $ \reg(K[\xb, \yb]/J_{G}) \leq \deg h_{K[\xb, \yb]/J_{G}} (\lambda)$, then 
\[
\reg \left(K[\xb, \yb, x_{n+1}, y_{n+1}]/J_{\widehat{G}} \right) < \deg h_{K[\xb, \yb]/J_{\widehat{G}}} (\lambda).
\] 
\end{Lemma}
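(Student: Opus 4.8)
The plan is to analyze the effect of suspension on both invariants separately, using known structural results about binomial edge ideals. For the regularity, I would invoke the fact that suspension corresponds to adding a cone point, and that coning over a graph interacts nicely with $J_G$: a standard tool here is the exact sequence or the result of Kiani--Saeedi Madani (and related work of Matsuda--Murai, Rauf--Rinaldo) describing $\reg$ under adding a universal vertex. Specifically, when $G$ is connected but not complete and $\dim K[\xb,\yb]/J_G = n+1$, the graph $G$ has the property that the regularity is controlled by induced paths, and adjoining a universal vertex does not increase it; the hypothesis $\deg h \geq 2$ (equivalently, $K[\xb,\yb]/J_G$ is not a hypersurface-type quotient in a trivial sense) should rule out the degenerate cases where regularity could drop or the dimension formula could fail. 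So the first step is to establish $\reg(K[\xb,\yb,x_{n+1},y_{n+1}]/J_{\widehat G}) = \reg(K[\xb,\yb]/J_G)$, likely citing the appropriate lemma and checking its hypotheses are met under our assumptions.

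For the $h$-polynomial statement, I would work with Hilbert series directly. The key observation is that $\widehat G$ has $n+1$ as a universal vertex, and there is a well-understood decomposition of $J_{\widehat G}$: one can use the short exact sequence relating $K[\xb,\yb,x_{n+1},y_{n+1}]/J_{\widehat G}$ to $K[\xb,\yb]/J_G$ and to a quotient by a linear form or a complete-intersection-type ideal. Concretely, I expect that adding the universal vertex contributes exactly the factor $(1+\lambda)$ to the numerator of the Hilbert series while the dimension $d$ changes in a compensating way, or — more carefully — that $\dim K[\xb,\yb,x_{n+1},y_{n+1}]/J_{\widehat G} = n+2$ (so the dimension goes up by one, matching the two new variables minus nothing, since the universal vertex's binomials impose enough relations), and the numerator degree increases by $1$. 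Tracking this requires computing $H_{K[\xb,\yb,x_{n+1},y_{n+1}]/J_{\widehat G}}(\lambda)$, which I would do via the mapping cone or via the fact (from Ohtani or from the Gröbner basis theory of binomial edge ideals) that $\ini(J_{\widehat G})$ is the Stanley--Reisner ideal of a nice simplicial complex whose $f$-vector I can relate to that of $G$. The identity $h_{K[\xb,\yb,x_{n+1},y_{n+1}]/J_{\widehat G}}(\lambda) = (1+\lambda)\, h_{K[\xb,\yb]/J_G}(\lambda)$ (up to checking the dimension bookkeeping) would immediately give $\deg h_{\widehat G} = \deg h_G + 1$.

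The final "in particular" clause is then pure arithmetic: if $\reg(K[\xb,\yb]/J_G) \leq \deg h_{K[\xb,\yb]/J_G}(\lambda)$, then
\[
\reg\bigl(K[\xb,\yb,x_{n+1},y_{n+1}]/J_{\widehat G}\bigr) = \reg(K[\xb,\yb]/J_G) \leq \deg h_{K[\xb,\yb]/J_G}(\lambda) < \deg h_{K[\xb,\yb]/J_G}(\lambda) + 1 = \deg h_{K[\xb,\yb,x_{n+1},y_{n+1}]/J_{\widehat G}}(\lambda),
\]
so no new work is needed there.

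The main obstacle I anticipate is the regularity equality, and in particular verifying that the dimension hypothesis $\dim K[\xb,\yb]/J_G = n+1$ together with $\deg h \geq 2$ genuinely forces the "good" case of whatever cone lemma I cite. Suspension can in principle lower the regularity (for instance, the suspension of a path is no longer a path and its binomial edge ideal behaves differently), so the content is precisely that these hypotheses prevent such a drop; I would need to check that $\widehat G$ still has an induced subgraph realizing the same regularity, or argue via the closed-formula description of $\reg$ for binomial edge ideals of graphs with a universal vertex. Establishing the dimension $\dim K[\xb,\yb,x_{n+1},y_{n+1}]/J_{\widehat G}=n+2$ is comparatively routine (the formula $\dim = \max_{S}(n - |S| + c(S\text{-deletion})) + |S| + \ldots$ of Herzog--Hibi--Hreinsd\'ottir--Kahle--Rauf applies), but I would double-check it, since both claimed identities rely on the dimensions behaving as expected.
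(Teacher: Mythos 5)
Your regularity step matches the paper: $\widehat G$ is the join of $G$ with a single vertex, and the paper cites Saeedi Madani--Kiani to get $\reg(K[\xb,\yb,x_{n+1},y_{n+1}]/J_{\widehat G}) = \max\{\reg(K[\xb,\yb]/J_G),2\}$, which equals $\reg(K[\xb,\yb]/J_G)$ because $G$ is connected and not complete (so its regularity is at least $2$). Note that the hypothesis doing the work here is non-completeness, not $\deg h_{K[\xb,\yb]/J_G}(\lambda)\geq 2$ as you suggest. The final ``in particular'' clause is indeed pure arithmetic, as you say.

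The gap is in the $h$-polynomial half. Your proposed identity $h_{K[\xb,\yb,x_{n+1},y_{n+1}]/J_{\widehat G}}(\lambda)=(1+\lambda)\,h_{K[\xb,\yb]/J_G}(\lambda)$ is false. The correct statement (Kumar--Sarkar, Theorem 4.6, which is what the paper uses) is additive, not multiplicative:
\[
H\bigl(K[\xb,\yb,x_{n+1},y_{n+1}]/J_{\widehat G},\lambda\bigr)=H\bigl(K[\xb,\yb]/J_G,\lambda\bigr)+\frac{2\lambda+(n-1)\lambda^{2}}{(1-\lambda)^{n+2}},
\]
so that $h_{K[\xb,\yb,x_{n+1},y_{n+1}]/J_{\widehat G}}(\lambda)=(1-\lambda)\,h_{K[\xb,\yb]/J_G}(\lambda)+2\lambda+(n-1)\lambda^{2}$. (For instance, for $G=P_3$ one gets $1+3\lambda+\lambda^{2}-\lambda^{3}$, not $(1+\lambda)^{3}$; in fact under your hypotheses your identity would force $\deg h_{K[\xb,\yb]/J_G}(\lambda)=1$, a contradiction.) The degree claim then follows because $\deg\bigl((1-\lambda)h_{K[\xb,\yb]/J_G}(\lambda)\bigr)=\deg h_{K[\xb,\yb]/J_G}(\lambda)+1\geq 3>2$, so the quadratic correction term cannot cancel or dominate the leading term. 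This is precisely where the hypothesis $\deg h_{K[\xb,\yb]/J_G}(\lambda)\geq 2$ enters, and it is the real content of the lemma; your proposal neither produces the correct Hilbert series relation nor identifies why that hypothesis is needed, so the central step remains unproved.
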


\begin{proof}
The suspension $\widehat{G}$ is the join product (\cite[p.3]{SMK2}) of $G$ and $\{n + 1\}$, and $\widehat{G}$ is not complete.  Hence, by virtue of \cite[Theorem 2.1]{SMK1} and \cite[Theorem 2.1 (a)]{SMK2}, one has
\[
\reg \left(K[\xb, \yb, x_{n+1}, y_{n+1}]/J_{\widehat{G}} \right) = \max\{ \reg(K[\xb, \yb]/J_{G}), 2 \} = \reg(K[\xb, \yb]/J_{G}). 
\]
Furthermore, \cite[Theorem 4.6]{KS} says that 
\begin{eqnarray*}
H\left(K[\xb, \yb, x_{n+1}, y_{n+1}]/J_{\widehat{G}}, \lambda \right) &=& H\left(K[\xb, \yb]/J_{G}, \lambda \right) + \frac{2\lambda + (n - 1)\lambda^{2}}{(1 - \lambda)^{n + 2}} \\
&=& \frac{h_{K[\xb, \yb]/J_{G}} (\lambda)}{(1 - \lambda)^{n + 1}} + \frac{2\lambda + (n - 1)\lambda^{2}}{(1 - \lambda)^{n + 2}} \\ 
&=& \frac{h_{K[\xb, \yb]/J_{G}} (\lambda) \cdot (1 - \lambda) + 2\lambda + (n - 1)\lambda^{2}}{(1 - \lambda)^{n + 2}}. 
\end{eqnarray*}
Thus $\deg h_{K[\xb, \yb, x_{n+1}, y_{n+1}]/J_{\widehat{G}}} (\lambda) = \deg h_{K[\xb, \yb]/J_{G}} (\lambda) + 1$, as desired. 
\end{proof}

We are now in the position to give a proof of Theorem \ref{main}. 

\begin{proof}(Proof of Theorem \ref{main}.)  Each of the following three cases is discussed.

\smallskip

\noindent
 {\bf (Case 1)} \,  Let $2 \leq r = s$.  
Let $G = P_{r + 1}$. 
As was shown in Example \ref{path}, one has 
\[
\reg(K[\xb, \yb]/J_{G}) = \deg h_{K[\xb, \yb]/J_{G}}(\lambda) = r.
\] 

\smallskip

\noindent
{\bf (Case 2)} \,  Let $r = 2$ and $3 \leq s$.  
Let $G = K_{s-1, s-1}$ denote the complete bipartite graph on the vertex set $[2s-2]$.  
By using \cite[Theorem 1.1 (c) together with Theorem 5.4 (a)]{SZ}, one has $\reg(K[\xb, \yb]/J_{G}) = 2$ and
\begin{eqnarray*}
F(K[\xb, \yb]/J_{G}, \lambda) &=& \frac{1 + (2s - 3)\lambda}{(1 - \lambda)^{2s - 1}} + \frac{2}{(1 - \lambda)^{2s - 2}} - \frac{2\{1 + (s - 2)\lambda \}}{(1 - \lambda)^{s}} \\
&=& \frac{1 + (2s - 3)\lambda + 2(1 - \lambda) - 2\{1 + (s - 2)\lambda \}(1 - \lambda)^{s - 1}}{(1 - \lambda)^{2s - 1}}. 
\end{eqnarray*}
Hence $\deg h_{K[\xb, \yb]/J_{G}} (\lambda) = s$, as required. 

\smallskip

\noindent
{\bf (Case 3)} \,  Let $3 \leq r < s$.  Let $G = \widehat{P_{r + 1}}^{s - r}$ be the $(s - r)$-th suspension of the path $P_{r + 1}$.  Applying Lemma \ref{suspension} repeatedly shows $\reg (S/J_{G}) = r$ and 
\begin{eqnarray*}
& & H(K[\xb, \yb]/J_{G}, \lambda) \\ 
& & \\
&=& \frac{(1 + \lambda)^{r} (1 - \lambda)^{s - r} + 2\lambda \left\{ \sum_{i = 0}^{s - r - 1} (1 - \lambda)^{i} \right\} + \lambda^{2} \sum_{i = 0}^{s - r - 1} (s - 1 - i)(1 - \lambda)^{i}}{(1 - \lambda)^{s + 2}} \\
& & \\
&=& \frac{(1 + \lambda)^{r} (1 - \lambda)^{s - r} + 2\lambda \cdot \frac{1 - (1 - \lambda)^{s - r}}{\lambda} + \lambda^{2} \cdot \frac{-1 + (1 - \lambda)^{s - r} + \lambda \{ s - r(1 - \lambda)^{s - r} \}}{\lambda^{2}}}{(1 - \lambda)^{s + 2}} \\
& & \\
&=& \frac{(1 + \lambda)^{r} (1 - \lambda)^{s - r} + 2\left\{1 - (1 - \lambda)^{s - r} \right\} -1 + (1 - \lambda)^{s - r} + \lambda \{s - r(1 - \lambda)^{s - r}\} }{(1 - \lambda)^{s + 2}} \\
& & \\
&=& \frac{(1 + \lambda)^{r} (1 - \lambda)^{s - r} + 1 - (1 - \lambda)^{s - r} + \lambda \{s - r(1 - \lambda)^{s - r}\}}{(1 - \lambda)^{s + 2}} \\
& & \\
&=& \frac{1 + s\lambda + (1 - \lambda)^{s - r} \{(1 + \lambda)^{r} - 1 - r\lambda \}}{(1 - \lambda)^{s + 2}}. 
\end{eqnarray*}
Hence $\deg h_{K[\xb, \yb]/J_{G}} (\lambda) = s$, as desired. 
\end{proof}

\section{Examples}
%A few classes of finite simple graphs which support Conjecture \ref{conj} will be demonstrated. 

\begin{Proposition}
\label{cycle}
The cycle $C_n$ of length $n \geq 3$ satisfies  
\[
\reg (K[\xb, \yb]/J_{C_{n}}) \leq \deg h_{K[\xb, \yb]/J_{C_{n}}} (\lambda). 
\]
\end{Proposition}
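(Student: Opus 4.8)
The plan is to determine $\dim$, $\reg$, and the $h$-polynomial of $K[\xb,\yb]/J_{C_n}$ accurately enough to compare the last two. First, for every $n\ge 3$ one has $\dim K[\xb,\yb]/J_{C_n}=n+1$: among the minimal primes $P_S(C_n)$ of $J_{C_n}$ (indexed by the cut sets $S$ of $C_n$), the one with $S=\emptyset$ is the determinantal ideal $J_{K_n}$ of $2\times 2$ minors of the generic $2\times n$ matrix, with $\dim K[\xb,\yb]/J_{K_n}=n+1$, whereas each of the others, corresponding to a nonempty independent set $S$ of $C_n$, satisfies $\dim K[\xb,\yb]/P_S(C_n)=n$.

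The case $n=3$ is settled at once: since $C_3=K_3$, the ring $K[\xb,\yb]/J_{C_3}$ is the (Cohen--Macaulay) determinantal ring of $2\times 2$ minors of a generic $2\times 3$ matrix, so $\reg(K[\xb,\yb]/J_{C_3})=\deg h_{K[\xb,\yb]/J_{C_3}}(\lambda)$, both being equal to $1$.

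For $n\ge 4$ I would prove the stronger strict inequality, by showing that $\reg(K[\xb,\yb]/J_{C_n})\le n-1$ and $\deg h_{K[\xb,\yb]/J_{C_n}}(\lambda)=n-1$. The regularity bound is just the Matsuda--Murai estimate, which for $C_n$ reads $\reg(K[\xb,\yb]/J_{C_n})\le n-1$; in fact $\reg(K[\xb,\yb]/J_{C_n})=n-2$, the lower bound coming from the induced path $P_{n-1}\subseteq C_n$, but $n-1$ is already enough. The substance is the $h$-polynomial, which I would get from the Hilbert series. Delete the edge $\{1,n\}$ of $C_n$ to obtain the path $P_n$, write $J_{C_n}=J_{P_n}+(f)$ with $f=x_1y_n-x_ny_1$, and use the short exact sequence
\[
0\longrightarrow \bigl(K[\xb,\yb]/(J_{P_n}:f)\bigr)(-2)\xrightarrow{\;\cdot f\;}K[\xb,\yb]/J_{P_n}\longrightarrow K[\xb,\yb]/J_{C_n}\longrightarrow 0
\]
together with $H_{K[\xb,\yb]/J_{P_n}}(\lambda)=(1+\lambda)^{n-1}/(1-\lambda)^{n+1}$ from Example~\ref{path}. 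The colon ideal can be pinned down: $f$ lies in the determinantal minimal prime $J_{K_n}$ of $J_{P_n}$ and in no other minimal prime of $J_{P_n}$, so if $J_{P_n}=J_{K_n}\cap Q$ with $Q$ the intersection of the remaining minimal primes, then $f\cdot Q\subseteq J_{K_n}\cap Q=J_{P_n}$, whence $Q\subseteq (J_{P_n}:f)$, and a comparison of supports (both equal the union of the components $V(P_S)$ with $S\neq\emptyset$) forces $(J_{P_n}:f)=Q$. Then $H_{K[\xb,\yb]/Q}(\lambda)$ follows by inclusion--exclusion from $H_{K[\xb,\yb]/J_{P_n}}(\lambda)$, from $H_{K[\xb,\yb]/J_{K_n}}(\lambda)=(1+(n-1)\lambda)/(1-\lambda)^{n+1}$, and from $H_{K[\xb,\yb]/(J_{K_n}+Q)}(\lambda)$; plugging this into the exact sequence and simplifying should show that the numerator of $H_{K[\xb,\yb]/J_{C_n}}(\lambda)$, written over $(1-\lambda)^{n+1}$, has degree exactly $n-1$.

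The hard part is this final bookkeeping: understanding the ideal $J_{K_n}+Q$ (morally, the overlap of the big determinantal component of $V(J_{P_n})$ with the union of its small components), and, after the top-degree terms cancel, checking that the degree-$(n-1)$ coefficient of the resulting $h$-polynomial is nonzero. One can instead sidestep it by quoting a closed form for $H_{K[\xb,\yb]/J_{C_n}}(\lambda)$, or for the graded Betti numbers of $J_{C_n}$, from the literature on binomial edge ideals of cycles; from either, $\deg h_{K[\xb,\yb]/J_{C_n}}(\lambda)=n-1$ and $\reg(K[\xb,\yb]/J_{C_n})=n-2$ can be read off, yielding $\reg(K[\xb,\yb]/J_{C_n})<\deg h_{K[\xb,\yb]/J_{C_n}}(\lambda)$ for $n\ge 4$.
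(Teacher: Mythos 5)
Your proposal is correct in outline and, in its ``fallback'' form, is essentially the paper's proof: the paper also gets the regularity from Matsuda--Murai (combined with Kiani--Saeedi Madani to pin down $\reg(K[\xb,\yb]/J_{C_n})=n-2$ exactly) and simply quotes Zafar--Zahid for $\deg h_{K[\xb,\yb]/J_{C_n}}(\lambda)$, which equals $1$ for $n=3$ and $n-1$ for $n>3$. Your observation that only the upper bound $\reg\le n-1$ is needed is a small economy over the paper, and your $n=3$ case agrees with the paper's formulas.

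The genuinely different part of your proposal --- computing the Hilbert series from scratch via $0\to \bigl(K[\xb,\yb]/(J_{P_n}:f)\bigr)(-2)\to K[\xb,\yb]/J_{P_n}\to K[\xb,\yb]/J_{C_n}\to 0$ --- is attractive and the identification $(J_{P_n}:f)=Q$ is right (more cleanly: $J_{P_n}$ is radical, $(J_{P_n}:f)=\bigcap_S(P_S:f)$, and $f$ lies in $P_\emptyset=J_{K_n}$ but in no $P_S$ with $S\neq\emptyset$ since $1$ and $n$ then lie in different components). But as written this route stops exactly where the work begins: you would still have to determine $J_{K_n}+Q$, assemble $H_{K[\xb,\yb]/Q}(\lambda)$ by inclusion--exclusion, and verify that after cancellation the coefficient of $\lambda^{n-1}$ in the numerator over $(1-\lambda)^{n+1}$ is nonzero. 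You acknowledge this and defer it, so the self-contained version is not yet a proof; it only becomes one once you either carry out that bookkeeping or replace it by the citation, at which point you have rejoined the paper's argument.
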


\begin{proof}
Since the length of the longest induced path of $C_{n}$ is $n - 2$,  
it follows from \cite[Theorem 1.1]{MM} and \cite[Theorem 3.2]{regularity} that $\reg (K[\xb, \yb]/J_{C_{n}}) = n - 2$.  Furthermore, \cite[Theorem 10 (b)]{ZZ} says that
\begin{equation*}
	\deg h_{K[\xb, \yb]/J_{C_{n}}} = \begin{cases} 1 & (n = 3), \\ 
								n - 1 & (n > 3).
					\end{cases}
\end{equation*}
Hence the desired inequality follows. 
\end{proof}

Let $k \geq 1$ be an integer and $p_{1}, p_{2}, \ldots, p_{k}$ a sequence of positive integers with $p_{1} \geq p_{2} \geq \cdots \geq p_{k} \geq 1$ and $p_{1} + p_{2} + \cdots + p_{k} = n$.  Let $V_{1}, V_{2}, \ldots, V_{k}$ denote a partition of $[n]$ with each $|V_{i}| = p_{i}$.  In other words, $[n] = V_{1} \sqcup V_{2} \sqcup \cdots \sqcup V_{k}$ and $V_{i} \cap V_{j} = \emptyset$ if $i \neq j$.  Suppose that 
\[
V_{i} = \left\{\sum_{j = 1}^{i - 1} p_{j} + 1, \ \sum_{j = 1}^{i - 1} p_{j} + 2, \ \ldots,  \sum_{j = 1}^{i - 1} p_{j} + p_{i} - 1,  \sum_{j = 1}^{i} p_{j} \right\}
\]
for each $1 \leq i \leq k$.  The {\em complete multipartite graph} $K_{p_{1}, \ldots, p_{k}}$ is the finite simple graph on the vertex set $[n]$ with the edge set
\[
E\left(K_{p_{1}, \ldots, p_{k}}\right) = \{ \, \{k, \ell\} \, : \, k \in V_{i}, \, \ell \in V_{j}, \, 1 \leq i < j \leq k \, \}. 
\] 

\begin{Proposition}
\label{completemultipartite}
The complete multipartite graph $G = K_{p_{1}, \ldots, p_{k}}$ satisfies 
\[
\reg (K[\xb, \yb]/J_{G}) \leq \deg h_{K[\xb, \yb]/J_{G}} (\lambda). 
\]
\end{Proposition}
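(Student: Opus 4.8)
The plan is to determine $\reg(K[\xb, \yb]/J_{G})$ and to bound $\deg h_{K[\xb, \yb]/J_{G}}(\lambda)$ below, treating a couple of degenerate cases first. Write $G = K_{p_{1}, \ldots, p_{k}}$ with $p_{1} = \max_{i} p_{i}$. If $k = 1$ then $J_{G} = (0)$ and both numbers are $0$. If $k \geq 2$ and every $p_{i} = 1$, then $G = K_{n}$ and $J_{G}$ is the ideal of $2 \times 2$ minors of a generic $2 \times n$ matrix, so $K[\xb, \yb]/J_{G}$ is Cohen--Macaulay and $\reg(K[\xb, \yb]/J_{G}) = \deg h_{K[\xb, \yb]/J_{G}}(\lambda) = 1$ by the equality recalled in the Introduction. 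Hence we may assume some $p_{i} \geq 2$, so $p_{1} \geq 2$ and $G$ is not complete.

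I would first show $\reg(K[\xb, \yb]/J_{G}) = 2$. The lower bound $\geq 2$ holds because $G$ contains an induced path of length $2$ (two vertices of a common part of size $\geq 2$ joined through any vertex outside that part) while it has no induced $P_{4}$: in a path $a-b-c-d$ the non-edges $\{a,c\}$ and $\{a,d\}$ would force $a,c,d$ into one part, contradicting the edge $\{c,d\}$. So \cite[Theorem 1.1]{MM} applies. For the upper bound, realize $G = \overline{K_{p_{1}}} * K_{p_{2}, \ldots, p_{k}}$ as a join product (with $\overline{K_{p_{1}}}$ the edgeless graph on $p_{1}$ vertices); since $\overline{K_{p_{1}}}$ is not complete, $G$ is not complete, and the regularity formula for join products (\cite[Theorem 2.1]{SMK1}, \cite[Theorem 2.1 (a)]{SMK2}; cf. the proof of Lemma \ref{suspension}), used repeatedly, yields $\reg(K[\xb, \yb]/J_{G}) = 2$. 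It remains to prove $\deg h_{K[\xb, \yb]/J_{G}}(\lambda) \geq 2$.

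By the cut-set description of the minimal primes of a binomial edge ideal (\cite{origin}), the minimal primes of $J_{G}$ for $G = K_{p_{1}, \ldots, p_{k}}$ are $J_{K_{n}}$, of dimension $n+1$, together with the linear primes $P_{i} = (x_{j}, y_{j} : j \notin V_{i})$ for those $i$ with $p_{i} \geq 2$, of dimension $2p_{i}$; thus $d := \dim K[\xb, \yb]/J_{G} = \max\{n+1, 2p_{1}\}$. If $n + 1 \geq 2p_{1}$, so $d = n+1$: the $|E(G)| = \tfrac{1}{2}(n^{2} - \sum_{i}p_{i}^{2})$ quadratic generators of $J_{G}$ are linearly independent, whence $\dim_{K}(K[\xb, \yb]/J_{G})_{2} = \binom{2n+1}{2} - |E(G)|$, and comparing the coefficient of $\lambda^{2}$ in $h_{K[\xb, \yb]/J_{G}}(\lambda) = (1-\lambda)^{n+1} H_{K[\xb, \yb]/J_{G}}(\lambda)$ gives $h_{2} = \sum_{i} \binom{p_{i}}{2} > 0$. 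If $n + 1 < 2p_{1}$, so $d = 2p_{1}$ and $p_{1} > \sum_{i \geq 2}p_{i}$: then $2p_{i} < n+1 < d$ for every $i \neq 1$, so $P_{1}$ is the unique minimal prime of $J_{G}$ of dimension $d$ and $K[\xb, \yb]/P_{1}$ is a polynomial ring; since $J_{G}$ is radical, the associativity formula for multiplicities gives $e(K[\xb, \yb]/J_{G}) = e(K[\xb, \yb]/P_{1}) = 1$, i.e. $h_{K[\xb, \yb]/J_{G}}(1) = 1$, whereas $h_{0} = 1$ and $h_{1} = 2n - d = 2\sum_{i \geq 2}p_{i} \geq 2$, so $h_{0} + h_{1} > h_{K[\xb, \yb]/J_{G}}(1)$ and $h_{K[\xb, \yb]/J_{G}}(\lambda)$ must have a nonzero coefficient in some degree $\geq 2$. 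In both cases $\deg h_{K[\xb, \yb]/J_{G}}(\lambda) \geq 2 = \reg(K[\xb, \yb]/J_{G})$.

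The parts I expect to need the most care are the two structural inputs. Establishing $\reg(K[\xb, \yb]/J_{G}) = 2$ requires the join regularity formula together with a clean handling of the iteration's base, where the second factor $K_{p_{2}, \ldots, p_{k}}$ can be a complete graph or an edgeless graph (a direct reference for the regularity of binomial edge ideals of complete multipartite graphs, if available, makes this immediate). The case $d = 2p_{1}$ relies on pinning down the minimal primes of $J_{G}$ and their dimensions from the cut sets of $G$, and on $J_{G}$ being radical so that its unique top-dimensional primary component has length one. The dimension computation, the edge count, and the degree-$2$ expansion producing $h_{2} = \sum_{i} \binom{p_{i}}{2}$ are routine.
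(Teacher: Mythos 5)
Your proposal is correct, but it takes a genuinely different route from the paper's. The paper argues by induction on $k$: when $p_k = 1$ it peels off the last part using the suspension Lemma \ref{suspension}, and when $p_k > 1$ it quotes the exact value of $\deg h_{K[\xb, \yb]/J_{G}}(\lambda)$ from \cite[Corollary 4.14]{KS} (namely $n - p_k + 1$ or $2p_1 - p_k$ according as $2p_1 < n+1$ or not) and checks that it is at least $3$. You prove only the inequality $\deg h_{K[\xb, \yb]/J_{G}}(\lambda) \geq 2 = \reg(K[\xb, \yb]/J_{G})$, but from first principles: when $d = n+1$ by computing $h_2 = \binom{n}{2} - |E(G)| = \sum_i \binom{p_i}{2} > 0$ directly from the Hilbert function in degrees at most $2$, and when $d = 2p_1$ by comparing $h_0 + h_1 = 1 + 2\sum_{i \geq 2} p_i \geq 3$ against $h(1) = e(K[\xb, \yb]/J_{G}) = 1$, the latter obtained from the cut-set description of the minimal primes, radicality of $J_G$, and the associativity formula for multiplicities. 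Both of these computations check out (your identification of $\Min(J_G)$ as $P_\emptyset$ together with the linear primes attached to the parts of size at least $2$ is right, as is $d = \max\{n+1, 2p_1\}$). Your treatment of the regularity is also somewhat more self-contained than the paper's bare citation of \cite[Theorem 2.1 (a)]{SMK2}, since you supply the lower bound via an induced path of length $2$ and \cite[Theorem 1.1]{MM} and the upper bound via the iterated join decomposition. What your route buys is independence from the Kumar--Sarkar Hilbert-series formula and from the suspension induction; what it gives up is the exact value of $\deg h_{K[\xb, \yb]/J_{G}}(\lambda)$, which the paper's argument produces as a byproduct.
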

\begin{proof}
%First, by virtue of \cite[Theorem 2.1(a)]{SMK2}, one has $\reg (S/J_{G}) \leq 2$. 
We claim $\reg (K[\xb, \yb]/J_{G}) \leq \deg h_{K[\xb, \yb]/J_{G}} (\lambda)$ by induction on $k$.  If $k = 1$, then $G = K_{p_{1}}$ is the complete graph and $\reg (K[\xb, \yb]/J_{G}) = \deg h_{K[\xb, \yb]/J_{G}} (\lambda) = 1$. 

Let $k > 1$.  If $p_{k} = 1$, then $G = \widehat{G'}$, where $G' = K_{p_{1}, \ldots, p_{k - 1}}$.  Lemma \ref{suspension} as well as the induction hypothesis then guarantees that
$\reg (K[\xb, \yb]/J_{G}) \leq \deg h_{K[\xb, \yb]/J_{G}} (\lambda)$. 
Hence one can assume that $p_{k} > 1$.  In particular $G$ is not complete.  It then follows from \cite[Theorem 2.1(a)]{SMK2} that $\reg (K[\xb, \yb]/J_{G}) = 2$.  Furthermore, \cite[Corollary 4.14]{KS} says that
\begin{equation*}
	\deg h_{K[\xb, \yb]/J_{G}} (\lambda) = \begin{cases} n - p_{k} + 1 & (2p_{1} < n + 1), \\ 
								2p_{1} - p_{k} & (2p_{1} \geq n + 1).
					\end{cases} 
\end{equation*}
Since $k > 1$ and $p_{k} > 1$, one has $\deg h_{K[\xb, \yb]/J_{G}} (\lambda) \geq n - p_{k} + 1 \geq p_{1} + 1 \geq 3$. 
Thus the desired inequality follows. 
\end{proof}

Let $t \geq 3$ be an integer and $K_{1, t}$ the complete bipartite graph on $\{1, v_{1}, \ldots, v_{t}\}$ with the edge set $E(K_{1, t}) = \{ \, \{1, v_{i}\} \, : \, 1 \leq i \leq t \, \}$.  Let $p_{1}, p_{2}, \ldots, p_{t}$ be a sequence of positive integers and $P^{(i)}$ the path of length $p_{i}$ on the vertex set $\{w_{i, 1}, w_{i, 2}, \ldots, w_{i, p_{i} + 1} \}$ for each $1 \leq i \leq t$.  Then the {\em t-starlike graph} $T_{p_{1}, p_{2}, \ldots, p_{t}}$ is defined as the finite simple graph obtained by identifying $v_{i}$ with $w_{i, 1}$ for each $1 \leq i \leq t$.  Thus the vertex set of $T_{p_{1}, p_{2}, \ldots, p_{t}}$ is
\[
\{1\} \cup \bigcup_{i = 1}^{t} \{w_{i, 1}, w_{i, 2}, \ldots, w_{i, p_{i} + 1} \}
\]
and its edge set is 
\[
E(T_{p_{1}, p_{2}, \ldots, p_{t}}) = \bigcup_{i = 1}^{t} \left\{ \{ w_{i, j}, w_{i, j + 1} \} \mid 0 \leq j \leq p_{i} \right\}, 
\] 
where $w_{i, 0} = 1$ for each $1 \leq i \leq t$.  

\begin{Proposition}
\label{starlike}
The $t$-starlike graph $G = T_{p_{1}, p_{2}, \ldots, p_{t}}$ satisfies 
\[
\reg (K[\xb, \yb]/J_{G}) < \deg h_{K[\xb, \yb]/J_{G}} (\lambda).
\]
\end{Proposition}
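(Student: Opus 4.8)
The plan is to compute both invariants (or at least to bound $\deg h$ from below) and then to compare them by elementary arithmetic. Reorder the arm lengths so that $p_{(1)} \geq p_{(2)} \geq \cdots \geq p_{(t)} \geq 1$; this is harmless, since the isomorphism type of $G$ does not depend on the order of the $p_i$.

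\emph{The regularity.} The graph $G = T_{p_1,\ldots,p_t}$ is a tree, and its longest induced path is the one running from the endpoint $w_{a,p_a+1}$ of a longest arm, through the centre $1$, to the endpoint $w_{b,p_b+1}$ of a second-longest arm; this path has $(p_{(1)}+1)+(p_{(2)}+1) = p_{(1)}+p_{(2)}+2$ edges and no chord. Combining the Matsuda--Murai lower bound \cite[Theorem 1.1]{MM} with the matching upper bound for this class of graphs (available as for the cycle in Proposition \ref{cycle}, or from the known formula for the regularity of binomial edge ideals of block graphs, $T_{p_1,\ldots,p_t}$ being a tree), I would conclude
\[
\reg\left(K[\xb, \yb]/J_{G}\right) \;=\; p_{(1)} + p_{(2)} + 2.
\]

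\emph{The $h$-polynomial.} Here I would invoke the Hilbert-series computations of \cite{KS} (the same source that supplied $\deg h$ for complete multipartite graphs in Proposition \ref{completemultipartite}); alternatively one computes $H_{K[\xb, \yb]/J_{G}}(\lambda)$ directly. For the latter there are two natural routes: (i) the centre $1$ is a cut vertex whose deletion splits $G$ into the $t$ arms $P_{p_1+2},\ldots,P_{p_t+2}$, each meeting $1$ at an endpoint, so one can assemble $H_{K[\xb, \yb]/J_{G}}(\lambda)$ from the complete-intersection Hilbert series of the arms (Example \ref{path}) via a short-exact-sequence / Mayer--Vietoris argument along this cut vertex; or (ii) pass to the initial ideal of $J_{G}$ with respect to the lexicographic order of \cite{origin}, which for a tree is the squarefree monomial ideal determined by the (here unique) admissible paths of $G$, and read off the Hilbert function from the corresponding simplicial complex. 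Either route should give an $h$-polynomial of degree
\[
\deg h_{K[\xb, \yb]/J_{G}}(\lambda) \;=\; \Big(\textstyle\sum_{i=1}^{t} p_i\Big) + 2
\]
(one should check whether, as in Proposition \ref{completemultipartite}, the exact value requires a case distinction according to the size of $p_{(1)}$; in any case $\deg h_{K[\xb, \yb]/J_{G}}(\lambda) \geq \sum_{i=1}^t p_i + 2$, which is all that is needed).

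\emph{The comparison.} Since $t \geq 3$ and $p_i \geq 1$ for every $i$,
\[
\sum_{i=1}^{t} p_i \;=\; p_{(1)} + p_{(2)} + \sum_{i=3}^{t} p_{(i)} \;\geq\; p_{(1)} + p_{(2)} + (t-2) \;\geq\; p_{(1)} + p_{(2)} + 1,
\]
so $\deg h_{K[\xb, \yb]/J_{G}}(\lambda) \geq p_{(1)} + p_{(2)} + 3 > p_{(1)} + p_{(2)} + 2 = \reg(K[\xb, \yb]/J_{G})$, which is the assertion. The real work is the $h$-polynomial step: the regularity is dictated by the longest induced path and the final inequality is immediate, but actually evaluating (or even just bounding from below) the degree of the $h$-polynomial of the binomial edge ideal of a spider requires a genuine Hilbert-series computation, be it through the cut-vertex decomposition, through the Gröbner basis of $J_{G}$, or through a direct appeal to \cite{KS}.
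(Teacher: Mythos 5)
There is a genuine error in your computation of the regularity, and it undermines the final comparison. The Matsuda--Murai result \cite[Theorem 1.1]{MM} gives only the \emph{lower} bound $\reg(K[\xb,\yb]/J_G) \geq \ell(G)$, where $\ell(G)$ is the length of the longest induced path; the matching upper bound that holds for cycles is not available here, and starlike trees are precisely a class where the lower bound is not attained. The block-graph formula you allude to, namely \cite[Corollary 3.4 (2)]{JNRR} (which is what the paper cites), gives
\[
\reg(K[\xb,\yb]/J_G) \;=\; 2 + \sum_{i=1}^{t} p_i,
\]
which strictly exceeds your claimed value $p_{(1)}+p_{(2)}+2$ whenever a third arm is present, i.e., always, since $t\geq 3$ and each $p_i\geq 1$ (already for $T_{1,1,1}$ the regularity is $5$, not $4$). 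Your $h$-polynomial value is also incorrect: the paper combines the Schenzel--Zafar Hilbert series of $K_{1,t}$ \cite[Theorem 5.4 (a)]{SZ} with \cite[Corollary 3.3]{KS} (the effect of attaching the $t$ pendant paths) to obtain
\[
h_{K[\xb,\yb]/J_G}(\lambda) \;=\; \left[1 + (1-\lambda)^{t-2}\left\{2\lambda + (t-1)\lambda^2\right\}\right](1-\lambda)^{\sum_{i=1}^{t} p_i},
\]
of degree $t + \sum_{i=1}^{t} p_i$, not $2 + \sum_{i=1}^{t} p_i$.

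The two mistakes interact badly: the quantity $2+\sum_i p_i$ that you assign to $\deg h$ is exactly the correct value of the \emph{regularity}, so the fallback bound $\deg h \geq 2+\sum_i p_i$ that you say ``is all that is needed'' in fact yields only $\deg h \geq \reg$, not the strict inequality claimed in the proposition. That your final arithmetic still produces a strict inequality is an artifact of the two errors partially cancelling, not a proof. With the correct values the statement reduces to $t + \sum_i p_i > 2 + \sum_i p_i$, i.e., to $t>2$; so the real content is the regularity formula of \cite{JNRR} (which records that regularity of binomial edge ideals of trees is governed by more than the longest induced path) together with an actual Hilbert-series computation, neither of which your proposal carries out correctly.
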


\begin{proof}
It follows from \cite[Corollary 3.4 (2)]{JNRR} that $\reg(K[\xb, \yb]/J_{G}) = 2 + \sum_{i = 1}^{t} p_{i}$. 
Furthermore, \cite[Theorem 5.4 (a)]{SZ} guarantees that 
\begin{eqnarray*}
F(K[\xb, \yb]/J_{K_{1, t}}) &=& \frac{1}{(1 - \lambda)^{2t}} - \frac{1 + (t - 1)\lambda}{(1 - \lambda)^{t + 1}} + \frac{1 + t\lambda}{(1 - \lambda)^{t + 2}} \\
& & \\
&=& \frac{1 - \{ 1 + (t - 1) \lambda \}(1 - \lambda)^{t - 1} + (1 + t\lambda)(1 - \lambda)^{t - 2}}{(1 - \lambda)^{2t}}  \\ 
& & \\
&=& \frac{1 + (1 - \lambda)^{t - 2} \left\{ 2\lambda + (t - 1)\lambda^{2} \right\}}{(1 - \lambda)^{2t}} 
\end{eqnarray*}
Hence, by virtue of \cite[Corollary 3.3]{KS}, one has 
\[
h_{K[\xb, \yb]/J_{G}} (\lambda) = \left[1 + (1 - \lambda)^{t - 2} \left\{ 2\lambda + (t - 1)\lambda^{2} \right\} \right] \cdot (1 - \lambda)^{\sum_{i = 1}^{t} p_{i}}. 
\]
Thus 
\[
\deg h_{K[\xb, \yb]/J_{G}} (\lambda) = t + \sum_{i = 1}^{t} p_{i} > 2 + \sum_{i = 1}^{t} p_{i} = \reg(K[\xb, \yb]/J_{G}),
\] 
as required.
\end{proof}

\begin{Example}
{\em
Let $m \geq 0$ be an integer and 
$G_{m}$ the finite simple graph on the vertex set $[m + 9]$ drawn below:

\bigskip

\begin{xy}
	\ar@{} (0,0);(30,0) *{\text{$G_{m} =$}};
	\ar@{} (0,0);(80, 10) *++!D{8} *\cir<2pt>{} = "A";
	\ar@{-} "A";(50, 0) *++!U{1} *\cir<2pt>{} = "B";
	\ar@{-} "B";(60, 0) *++!U{2} *\cir<2pt>{} = "C";
	\ar@{-} "C";(70, 0) *++!U{3} *\cir<2pt>{} = "D";
	\ar@{-} "D";(80, 0) *++!LD{4} *\cir<2pt>{} = "E";
	\ar@{-} "E";(90, 0) *++!U{5} *\cir<2pt>{} = "F";
	\ar@{-} "F";(100, 0) *++!U{6} *\cir<2pt>{} = "G";
	\ar@{-} "G";(110, 0) *++!U{7} *\cir<2pt>{} = "H";
	\ar@{-} "A";"H";
	\ar@{-} "A";"E";
	\ar@{-} "E";(80, -8) *++!L{9} *\cir<2pt>{} = "I";
	\ar@{-} "I";(80, -12) = "J";
	\ar@{.} "J";(80, -16) = "K";
	\ar@{-} "K";(80, -20) *++!L{m + 9} *\cir<2pt>{} = "L";
\end{xy}

\bigskip 

Then $K[\xb, \yb]/J_{G_{m}}$ is not unmixed. 
In fact, for each subset $S \subset [m + 9]$, we define 
\[
P_{S} = \left( \bigcup_{i \in S} \{ x_{i}, y_{i} \}, J_{\tilde{G}_{1}}, \ldots, J_{\tilde{G}_{c(S)}} \right), 
\]
where $G_{1}, \ldots, G_{c(S)}$ are connected components of 
$G_{[m + 9] \setminus S}$ and where $\tilde{G}_{1}, \ldots, \tilde{G}_{c(S)}$ is the 
complete graph on the vertex set $V(G_{1}), \ldots, V\left(G_{c(S)}\right)$, respectively. 
It then follows from \cite[Lemma 3.1 and Corollary 3.9]{origin}
that $P_{\emptyset}$ and $P_{\{ 3, 8 \}}$ are minimal primes of $J_{G_{m}}$ and that 
$\height P_{\emptyset} = m + 8 < \height P_{ \{3, 8\} } = m + 9$. 
Thus $K[\xb, \yb]/J_{G_{m}}$ is not unmixed.  In particular, $K[\xb, \yb]/J_{G_{m}}$ is not Cohen-Macaulay. 
However, one has 
$\reg (K[\xb, \yb]/J_{G_{m}}) = \deg h_{K[\xb, \yb]/J_{G_{m}}} (\lambda) = m + 6$.
} 
\end{Example}

\noindent
{\bf Acknowledgements} 
The first author is partially supported by JSPS KAKENHI 19H00637.  
The second author is partially supported by JSPS KAKENHI 20K03550.


\begin{thebibliography}{99}

\bibitem{BH}
W.~Bruns and J.~Herzog, 
Cohen-Macaulay Rings, Revised ED., 
Cambridge Stud. Adv. Math., vol. {\bf 39}, 
Cambridge University Press, Cambridge, 1998. 

%\bibitem{BV}
%B.~Benedetti and M.~Varbaro, 
%On the dual graph of Cohen-Macaulay algebras, 
%{\em Int. Math. Res. Not. IMRN} {\bf 17} (2015), 8085--8115.

%\bibitem{CH}
%A.~Conca and J.~Herzog, 
%Castelnuovo--Mumford regularity of products of ideals, 
%{\em Collect. Math.} {\bf 54} (2003), 137--152. 

%\bibitem{hhGTM}
%J.~Herzog and T.~Hibi, 
%Monomial ideals, 
%Graduate Texts in Mathematics {\bf 260}, 
%Springer, London, 2010. 

\bibitem{origin}
J.~Herzog, T.~Hibi, F.~Hreind\'{o}ttir, T.~Kahle and J.~Rauh, 
Binomial edge ideals and conditional independence statements, 
{\em Adv. in Appl. Math.} {\bf 45} (2010), 317--333. 

\bibitem{HM1}
T.~Hibi and K.~Matsuda, 
Regularity and $h$-polynomials of monomial ideals,
{\em Math. Nachr.} {\bf 291} (2018), 2427--2434. 

\bibitem{HM2}
T.~Hibi and K.~Matsuda, 
Lexsegment ideals and their h-polynomials, 
{\em Acta Math. Vietnam.} {\bf 44} (2019), 83--86. 

\bibitem{JNRR}
A.~V.~Jayanthan, N.~Narayanan and B.~V.~Raghavendra Rao, 
Regularity of binomial edge ideals of certain block graphs, 
{\em Proc. Indian Acad. Sci. Math. Sci.} {\bf 2019}, Paper No. 36, 10 pp. 

\bibitem{regularity}
D.~Kiani and S.~Saeedi Madani, 
The Castelnuovo--Mumford regularity of binomial edge ideals, 
{\em J. Combin. Theory Ser. A} {\bf 139} (2016), 80--86. 

\bibitem{KS}
A.~Kumar and R.~Sarkar, 
Hilbert series of binomial edge ideals, 
{\em Comm. Algebra} {\bf 47} (2019), 3830--3841. 

\bibitem{MM}
K.~Matsuda and S.~Murai, 
Regularity bounds for binomial edge ideals, 
{\em J. Commut. Algebra} {\bf 5} (2013), 141--149. 

\bibitem{O1}
M.~Ohtani, 
Graphs and ideals generated by some $2$-minors,
{\em Comm. Algebra} {\bf 39} (2011), 905--917.

%\bibitem{O2}
%M.~Ohtani, 
%Binomial edge ideals of complete multipartite graphs, 
%{\em Comm. Algebra} {\bf 41} (2013), 3858--3867.

\bibitem{SMK1}
S.~Saeedi Madani and D.~Kiani, 
Binomial edge ideals of graphs, 
{\em Electron. J. Combin.} {\bf 19} (2012), Paper 44, 6 pp. 

\bibitem{SMK2}
S.~Saeedi Madani and D.~Kiani, 
Binomial edge ideals of regularity 3, 
{\em J. Algebra} {\bf 515} (2018), 157--172. 

\bibitem{SZ}
P.~Schenzel and S.~Zafar, 
Algebraic properties of the binomial edge ideal of a complete bipartite graph, 
{\em An. \c{S}tiin\c{t}. Univ. ``Ovidius" Constan\c{t}a Ser. Mat.} {\bf 22} (2014), 217--237. 

% \bibitem{Sta}
% R.~Stanley, 
% Hilbert functions of graded algebras, 
% {\em Adv. Math.} {\bf 28} (1978), 57--83. 

\bibitem{V}
W.~V.~Vasconcelos, 
Computational Methods in Commutative Algebra and Algebraic Geometry, 
Springer-Verlag, 1998. 

\bibitem{ZZ}
S.~Zafar and Z.~Zahid,
On the betti numbers of some classes of binomial edge ideals, 
{\em Electron. J. Combin.} {\bf 20} (2013), {Paper 37}, 14pp. 

\end{thebibliography}
\end{document}